\newtheorem{theorem}{Theorem}[section]
\newtheorem{corollary}[theorem]{Corollary}
\newtheorem{lemma}[theorem]{Lemma}
\theoremstyle{definition}
\theoremstyle{remark}
\numberwithin{equation}{section}
\begin{document}

\title[Geometry of Curves in $\mathbb R^n$, SVD, and Hankel Determinants]{Geometry of Curves in $\mathbb R^n$, Singular Value Decomposition, and Hankel Determinants}

\author[J. \'Alvarez-Vizoso]{Javier \'Alvarez-Vizoso}
\author[R. Arn]{Robert Arn}
\author[M. Kirby]{Michael Kirby}
\author[C. Peterson]{Chris Peterson}
\address{Department of Mathematics, Colorado State University, Fort Collins, CO, USA}
\email{alvarez@math.colostate.edu, arn@math.colostate.edu, kirby@math.colostate.edu,\newline peterson@math.colostate.edu}

\author[B. Draper]{Bruce Draper}
\address{Department of Computer Science, Colorado State University, Fort Collins, CO, USA}
\email{draper@cs.colostate.edu}

\date{\today}

\maketitle

\begin{abstract}
Let $\gamma: I \rightarrow \mathbb R^n$ be a parametric curve of class $C^{n+1}$, regular of order $n$. The Frenet-Serret apparatus of $\gamma$ at $\gamma(t)$ consists of a frame $e_1(t), \dots , e_n(t)$ and generalized curvature values $\kappa_1(t), \dots, \kappa_{n-1}(t)$. Associated with each point of $\gamma$ there are also local singular vectors $u_1(t), \dots, u_n(t)$ and local singular values $\sigma_1(t), \dots, \sigma_{n}(t)$. This local information is obtained by considering a limit, as $\epsilon$ goes to zero, of covariance matrices defined along $\gamma$ within an $\epsilon$-ball centered at  $\gamma(t)$. We prove that for each $t\in I$,  the Frenet-Serret frame and the local singular vectors agree at $\gamma(t)$ and that the values of the curvature functions at $t$ can be expressed as a fixed multiple of a ratio of local singular values at $t$. More precisely, we show that if $\gamma(t)\subset \mathbb R^n$ for any $n\in\mathbb N$ then, for each $i$ between $2$ and $n$, $\kappa_{i-1}(t)=\sqrt{a_{i-1}}\frac{\sigma_{i}(t)}{\sigma_1(t) \sigma_{i-1}(t)}$ with $a_{i-1} = \left(\frac{i}{i+(-1)^i}\right)^2 {\frac{4i^2-1}{3}}$. For this we prove a general formula for the recursion relation of a certain class of sequences of Hankel determinants using the theory of monic orthogonal polynomials and moment sequences.
\end{abstract}


\section{Introduction}
\label{intro}
Consider an interval $I\subset \mathbb R$ and a vector valued function $\gamma: I \rightarrow \mathbb R^n$. If $\gamma$ is $k$ times differentiable, with continuous derivatives, then $\gamma$ is said to be a parametric curve of class $C^k$. Let $\gamma^{(k)}$ denote the $k^{th}$ derivative of $\gamma$. If for each $t\in I$, the set of vectors $\{\gamma^{(1)}(t), \gamma^{(2)}(t), \dots, \gamma^{(r)}(t)\}$ are linearly independent in $\mathbb R^n$, then $\gamma$ is said to be regular of order $r$. If $\|\gamma^\prime(t)\|=1$ for each $t\in I$ then $\gamma$ is said to be parameterized by arc length. 

Let $\gamma: I \rightarrow \mathbb R^n$ be a parametric curve of class $C^{n+1}$, regular of order $n$, parameterized by arc length. 
At any point $\gamma(t)\in \gamma(I)$, the Frenet-Serret frame
is determined by applying the Gram-Schmidt process to the
vectors $\gamma^{(1)}(t), \gamma^{(2)}(t), \dots, \gamma^{(n)}(t)$. Thus the Frenet-Serret frame at $\gamma(t)$ is the ordered sequence of orthonormal vectors\newline $e_1(t), e_2(t), \dots, e_n(t)$, where 
$$e_i(t)=\frac{\tilde{e}_i(t)}{\|\tilde{e}_i(t)\|} \hskip .1in {\rm with} \hskip .1in \tilde{e}_{i}(t) = \gamma^{(i)}(t)-\sum_{k=1}^{i-1}<\gamma^{(i)}(t),e_k(t)>e_k(t) \hskip .1in {\rm for} \hskip .1in 1\leq i \leq n.$$
The generalized curvature functions  of $\gamma$ are defined by $$\kappa_i(t) =\ <e_i^\prime(t),e_{i+1}(t)> \hskip .1in {\rm for} \hskip .1in 1\leq i \leq n-1.$$ With this definition, $\kappa_i(t) >0$ for all $i$.
The frame functions $e_1(t), e_2(t), \dots, e_n(t)$ together with the generalized curvature functions $\kappa_1(t), \dots, \kappa_{n-1}(t)$ is called the Frenet-Serret apparatus of $\gamma$. The Frenet-Serret apparatus of a curve characterizes the curve up to translation.

By the definition of the $e_i(t)$, we have 
$$e_i(t) \in span \{ \gamma^{(1)}(t), \dots,  \gamma^{(i)}(t)\} \hskip .1in {\rm for} \hskip .1in i=1, \dots, n-1.$$
Thus,
$$e^\prime_i(t) \in span \{ \gamma^{(1)}(t), \dots,  \gamma^{(i+1)}(t)\} = span \{e_1(t), \dots, e_{i+1}(t)\}.$$
As a consequence, $$<e_i^\prime(t),e_j(t)>\ =\ 0 \hskip .1in {\rm whenever} \hskip .1in j\geq i+2.$$
If we differentiate the expression $<e_i(t),e_i(t)> \  =\ 1$ then we obtain $$<e_i^\prime(t),e_i(t)>+<e_i(t),e_i^\prime(t)> \ =\ 0,$$ from which we can conclude that $$<e_i^\prime(t),e_i(t)>\  =\ 0 \hskip .1in {\rm for} \hskip .1in 1\leq i \leq n.$$
In a similar manner, if $i\neq j$ then we differentiate the expression $<e_i(t),e_j(t)> \  =\ 0$ to obtain $$<e_i^\prime(t),e_j(t)>+<e_i(t),e_j^\prime(t)> \ =\ 0,$$ from which we can conclude that $$<e_i^\prime(t),e_j(t)>\  =\ -<e_j^\prime(t),e_i(t)>. $$
Let $E$ denote the orthonormal matrix whose columns are $e_1(t), \dots, e_n(t)$. The above formulas show that $E^TE^\prime =K$ with
$K$ a tri diagonal skew symmetric matrix.
Since $E$ is orthonormal (thus $EE^T=I$), we can multiply on the left by $E$ to arrive at the expression $E^\prime = EK$. Recalling that $\kappa_i(t) =\ <e_i^\prime(t),e_{i+1}(t)>$ we can express $K$ as:

$$K = 
 \left( \begin{array}{cccccc}
0 & -\kappa_1(t) & 0 & 0 & 0  \\
\kappa_1(t)      & 0 & -\kappa_2(t) & 0 & 0  \\
0 & \kappa_2(t) & 0 & \ddots & 0  \\
0 &  0& \ddots & 0 & -\kappa_{n-1}(t)\\
0 & 0 & 0 &  \kappa_{n-1}(t) & 0  \end{array} \right)$$

If the generalized curvature functions $\kappa_1(t), \dots, \kappa_{n-1}(t)$ in the matrix $K$ are constant, then the solution to the differential equation, $E^\prime = EK$, can be shown to be (up to translation) of the form
\begin{equation}
\label{FS-even}
\gamma_e(t) = \bmatrix a_1\cos(\alpha_1 t)\\ a_1 \sin( \alpha_1 t)\\  \vdots \\ a_{k}\cos(\alpha_k t) \\ a_k \sin( \alpha_k t))\endbmatrix \hskip .3in {\rm or} \hskip .3in \gamma_o(t) = \bmatrix a_1\cos(\alpha_1 t)\\ a_1 \sin( \alpha_1 t)\\  \vdots \\ a_{k}\cos(\alpha_k t) \\ a_k \sin( \alpha_k t)) \\ bt \endbmatrix \end{equation}
with the first equation, $\gamma_e(t)$, covering the case when $n$ is even with $k=n/2$ and the second equation covering the case when $n$ is odd with $k=(n-1)/2$ \cite{kuhnel2006differential}.

\section{Local approximation}
Consider a curve $\gamma(t)$ in $\mathbb R^n$. Recall that if $\gamma(t)$ is parameterized by arc length then $\gamma(t)$ is a solution to the differential equation $E^\prime = EK$.  We would like to understand the associated frame $e_1(t), \dots, e_n(t)$ and curvature functions $\kappa_1(t), \dots , \kappa_{n-1}(t)$
from a different point of view.   Specifically, consider points on the curve within an $\epsilon$-ball centered at a point
$s_0=\gamma(t_0)$. The tangent line at $s_0$ is approximated by taking the span of two points on $\gamma(t)$ in an $\epsilon$-ball centered at $s_0$ while the {\it osculating} plane at $s_0$ is approximated by 
taking the span of three points on $\gamma(t)$ in an $\epsilon$-ball centered at $s_0$. However, points on the curve in a small $\epsilon$-ball are nearly linear. The value of $\kappa_1(t_0)$ can be seen as a measure of the failure of the linearity of such points. In a similar manner, the value of the second curvature function, $\kappa_2(t_0)$ is a measure of the failure of planarity of points in an $\epsilon$-ball on the curve. This point of view will be considered more closely in the next section through the local singular value decomposition. In order to make this connection, it is helpful to replace the curve with an idealized version which agrees, to high order, with the curve at $\gamma(t_0)$.
 
 \subsection{Local approximation of curves in $\mathbb R^3$ and $\mathbb R^4$}
Consider a curve $\gamma(t)$ in $\mathbb R^3$. The helix of best fit to $\gamma$ at $\gamma(t_0)$ is the solution to the differential equation $E^\prime = EK_{t_0}$ where $K_{t_0}$ denotes the curvature matrix $K$ evaluated at $t_0$. Thus the curvature functions for the helix will be constants $\kappa_1=\kappa_1(t_0)$ and $\kappa_2=\kappa_2(t_0)$. In $\mathbb R^3$, the general solution, $g(t)$, to the differential equation, $E^\prime = EK_{t_0}$, has the form 
$$g(t) = (a\cos(\alpha t), a \sin(\alpha t), bt)+Constant.$$ The helix of best fit to $\gamma(t)$ at $\gamma(t_0)$ is given by
$$h(t) = g(t) - g(t_0)+\gamma(t_0).$$
If $||\gamma^{(1)}(t_0)||=1$ then we get the condition that
$$
a^2\alpha^2+b^2 = 1.
$$
The relationship between the curvature functions of the helix and the parameters $a,b,\alpha$ is:
\begin{align*} \kappa_1^2 & = a^2 \alpha^4,\\
\kappa_2^2 & = b^2 \alpha^2.\end{align*}

In a similar manner, if we solve the differential equation $E^\prime = EK_{t_0}$ for a curve $\gamma(t)$ in $\mathbb R^4$ then we obtain a toroidal curve of best fit at $\gamma(t_0)$ of the form
$$h(t) = g(t) - g(t_0)+\gamma(t_0),$$
where
$$g(t) = (a\cos(\alpha t), a \sin(\alpha t), b\cos(\beta t), b \sin(\beta t))+Constant.$$ 
We can relate $a,b,\alpha, \beta$ to the curvature functions as
\begin{align*} \kappa_1^2 &= a^2\alpha^4 + b^2\beta^4,\\
\kappa_1^2 \kappa_2^2 &= a^2\alpha^6 + b^2\beta^6 -\kappa_1^4,\\
\kappa_1^2 \kappa_2^2 \kappa^3_3 &= a^2\alpha^8 + b^2\beta^8 -\kappa_1^2(\kappa_1^2 +\kappa_2^2)^2,\end{align*}
where again we have assumed that the curve is parameterized by
arc length so
$$
a^2\alpha^2+b^2\beta^2 = 1.
$$
These equations are derived for $\kappa_1, \kappa_2, \kappa_3$ in \cite{kuhnel2006differential}.
Next we give the corresponding equations for curves in $\mathbb R^5$ and $\mathbb R^6$. The derivation is straightforward but tedious.

\subsection{Curvature relations in $\mathbb R^5$ and $\mathbb R^6$}
If we solve the differential equation $E^\prime = EK_{t_0}$ for a curve $\gamma(t)$ in $\mathbb R^5$ then we obtain a curve of best fit at $\gamma(t_0)$ of the form
$$h(t) = g(t) - g(t_0)+\gamma(t_0),$$ where
$$g(t) = (a\cos(\alpha t), a \sin(\alpha t), b\cos(\beta t), b \sin(\beta t),ct)+Constant.$$ 
We can relate $a,b,c,\alpha, \beta$ to the curvature functions as
$$\begin{array}{ccl}
1&=&{a}^{2}{\alpha}^{2}+{b}^{2}{\beta}^{2}+{c}^{2}\\
{{\kappa_1}}^{2}&=&{a}^{2}{\alpha}^{4}+{b}^{2}{\beta}^{4}\\
{{\kappa_1}}^{2}{{\it \kappa_2}}^{2}&=&{a}^{2}{\alpha}^{6}+{b}^{2}{\beta}^
{6}-{{\kappa_1}}^{4}\\
{{\kappa_1}}^{2}{{\it \kappa_2}}^{2}{{\it \kappa_3}}^{2}&=& {a}^{2}{\alpha}^{8}+{b}^{2}{\beta}^{8}-\kappa_1^2(\kappa_1^2 +\kappa_2^2)^2\\
{{\kappa_1}}^{2}{{\it \kappa_2}}^{2}{{\it \kappa_3}}^{2}{{\it \kappa_4}}^{2}&=&{a}^{2}{\alpha}^{10}+{b}^{2}{\beta}^{10} -\kappa_1^2((\kappa_1^2+\kappa_2^2+\kappa_3^2)(\kappa_2^2+\kappa_3^2)+\kappa_2^2\kappa_3^4).
\end{array}$$
In $\mathbb R^6$ the curve of best fit has
$$g(t) = (a\cos(\alpha t), a \sin(\alpha t), b\cos(\beta t), b \sin(\beta t),c\cos(\delta t),c\sin(\delta t))+Constant.$$ 
Letting $G_k=a^2\alpha^k+b^2\beta^k+c^2\delta^k$, we can relate $a,b,c,\alpha, \beta,\delta$ to the curvature functions as

\vspace{10pt}
\hspace{15pt}
$\begin{array}{ccl}
1&=&G_2\\
\kappa_1^2 &=& G_4\\
{\kappa_{{1}}}^{2}{\kappa_{{2}}}^{2}&=&G_6-{\kappa_{{1}}
}^{4}\\
{\kappa_{{1}}}^{2}{\kappa_{{2}}}^{2}{\kappa_{{3}}}^{2}&=&
G_8
-\kappa_1^2(\kappa_1^2 +\kappa_2^2)^2\\
{\kappa_{{1}}}^{2}{\kappa_{{2}}}^{2}{\kappa_{{3}}}^{2}{\kappa_{{4}}}^{2}
&=&G_{10}-\kappa_1^2((\kappa_1^2+\kappa_2^2+\kappa_3^2)(\kappa_2^2+\kappa_3^2)+\kappa_2^2\kappa_3^4)\\
{\kappa_{{1}}}^{2}{\kappa_{{2}}}^{2}{\kappa_{{3}}}^{2}{\kappa_{{4}}}^{
2}{\kappa_{{5}}}^{2}
&=&G_{12}-G_{10}(\kappa_1^2+\kappa_2^2+\kappa_3^2+\kappa_4^2)+F_8(\kappa_1^2\kappa_3^2+\kappa_4^2\kappa_1^2+\kappa_4^2\kappa_2^2).
\end{array}$
\newline

\section{The Local Singular Value Decomposition}
Recall that at each point $\gamma(t)\in \gamma(I)$, the Frenet-Serret frame
is determined by applying the Gram-Schmidt process to the
vectors $\gamma^{(1)}(t), \gamma^{(2)}(t), \dots, \gamma^{(n)}(t)$ (where $\gamma^{(k)}(t)$ denotes the $k^{th}$ derivative of $\gamma$ evaluated at $t$). We denote this ordered orthonormal basis $e_1(t), \dots, e_n(t)$ and let $E$ denote the orthonormal matrix whose columns are the $e_i(t)$.
The main intuition behind a local singular value analysis is to exploit the idea
that the Frenet-Serret frame may be viewed as finding the
subspace of best fit at a point on the curve. We consider the canonical solution
of the Frenet-Serret formula where $\kappa_i$ is assumed to be constant, i.e., 
the solutions to $E^\prime = EK$ given by Equation (\ref{FS-even}) where $K$ is constant.  We use an integral
formulation of the singular value decomposition, often referred to as the Karhunen-Lo\`eve
transformation, at a given point on the curve.  We then use a Taylor series approximation for $\gamma(t)$ to determine particular
eigenvalues of the Karhunen-Lo\`eve
transformation in the $\epsilon$-ball.  These relationships can be combined with
the relationships between the curvature constants and the curve parameters
to determine a formula for computing $\kappa_i$ locally
from the singular values of the Karhunen-Lo\`eve
transformation.

\subsection{Formulation}
Broomhead et al showed  that the {\it local} singular value decomposition
could be used to compute the topological dimension of a manifold from sampled points lying on the manifold \cite{broomhead1991local}.  This 
provided a powerful tool for many applications that involved modeling data on manifolds.
The original setting of \cite{broomhead1991local} concerned the reconstruction of  a manifold, via Takens' theorem, from scalar valued time series statistics of a dynamical system on the manifold.  
 The local singular value decomposition is also useful for applying 
manifold learning algorithms for geometric data analysis, e.g., 
local models such as charts \cite{kirby_1996sub2}, or global models based on
Whitney's embedding theorem \cite{kirby_1998a}.  A more detailed discussion may be found in
\cite{kirby2000geometric, little2009estimation}.

Following \cite{broomhead1987topological,broomhead1991local},
the {\it mean centered} covariance matrix of $\gamma(t)$ at $t$ is the matrix $$\overline{C}_{\epsilon}(t)=\frac{1}{2\epsilon} \int_{t -\epsilon}^{t + \epsilon} 
(\gamma(s)-\overline{\gamma}_{\epsilon}(t)) (\gamma(s)-\overline{\gamma}_{\epsilon}(t))^Tds$$ where
$$\overline{\gamma}_{\epsilon}(t)= \frac{1}{2\epsilon} \int_{t -\epsilon}^{t + \epsilon} 
\gamma(s) \ ds.$$
However, we will consider the closely related {\it on the curve} covariance matrix
\begin{equation}
	C_{\epsilon}(t)=\frac{1}{2\epsilon} \int_{t -\epsilon}^{t + \epsilon} 
(\gamma(s)-\gamma(t)) (\gamma(s)-\gamma(t))^Tds.
\end{equation}
By the singular value decomposition, we have a factorization $$C_{\epsilon}(t)=U_{\epsilon}(t) \Sigma_{\epsilon}(t)U^T_{\epsilon}(t)$$ where we assume that the diagonal elements in $\Sigma_{\epsilon}(t)$ are in monotone decreasing order. We call the columns of $U_{\epsilon}(t)$ the singular vectors of $C_{\epsilon}(t)$. Note that such singular vectors are only defined up to a factor of $\pm 1$. Let $U(t)=\lim_{\epsilon \rightarrow 0} U_{\epsilon}(t)$. The columns of $U(t)$, written $u_1(t), \dots, u_n(t)$, are called the local singular vectors at $\gamma(t)$. In a similar manner, one can define the local singular vectors $\overline{u}_1(t), \dots, \overline{u}_n(t)$ at $\gamma(t)$ by considering the limiting behavior of the singular vectors in the singular value decomposition of $\overline{C}_{\epsilon}(t)$ as $\epsilon$ tends towards zero.
\begin{theorem} \label{big}
Let $\gamma: I \rightarrow \mathbb R^n$ be a parametric curve of class $C^{n+1}$, regular of order $n$. Let $e_1(t), \dots, e_n(t)$ denote the Frenet-Serret frame at $\gamma(t)$. Let $u_1(t), \dots, u_n(t)$ denote the local singular vectors at $\gamma(t)$. Then for $i=1, \dots, n$, $e_i(t)=\pm u_i(t)$.
\end{theorem}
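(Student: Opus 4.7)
To prove Theorem \ref{big}, I would compute $C_\epsilon(t)$ in the Frenet--Serret basis and identify its eigenvectors by exploiting a sharp scaling hierarchy in $\epsilon$.

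First, expand $\gamma(s)-\gamma(t)$ as a Taylor polynomial of degree $n$ with Lagrange remainder $R_n(s,t)=O((s-t)^{n+1})$, valid because $\gamma\in C^{n+1}$. Substituting into the definition of $C_\epsilon(t)$ and using $\frac{1}{2\epsilon}\int_{-\epsilon}^{\epsilon}\tau^{m}\,d\tau=\epsilon^{m}/(m+1)$ when $m$ is even and $0$ when $m$ is odd, one obtains
\[
C_\epsilon(t)=\sum_{\substack{j,k\geq 1\\ j+k\text{ even}}}\frac{\epsilon^{j+k}}{(j+k+1)\,j!\,k!}\,\gamma^{(j)}(t)\,\gamma^{(k)}(t)^{T}+\mathcal{E}_\epsilon,
\]
with entry-wise error $\mathcal{E}_\epsilon[a,b]=O(\epsilon^{\min(a,b)+n+1})$ coming from Taylor/remainder cross contributions. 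Using the Gram--Schmidt relation $\gamma^{(k)}(t)=\sum_{a=1}^{k}r_{ak}\,e_a(t)$ (with $r_{kk}>0$ by regularity) and letting $\tilde C_\epsilon:=E^{T}C_\epsilon E$, the entries of $\tilde C_\epsilon$ in the Frenet--Serret basis have leading behavior $\tilde C_\epsilon[a,a]=\epsilon^{2a}\,r_{aa}^{2}/((2a+1)(a!)^{2})+O(\epsilon^{2a+2})$ on the diagonal, while each off-diagonal entry is of order $\epsilon^{a+b}$ (if $a+b$ is even) or $\epsilon^{a+b+1}$ (if $a+b$ is odd) -- in every case strictly higher order than both $\epsilon^{2a}$ and $\epsilon^{2b}$ individually.

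Next, I would show that the eigenvectors of $\tilde C_\epsilon$ converge to the standard coordinate basis vectors as $\epsilon\to 0$. Since the diagonal entries occupy distinct orders $\epsilon^{2},\dots,\epsilon^{2n}$, a min--max/Courant--Fischer argument combined with the trace identity gives $\lambda_a=\tilde C_\epsilon[a,a](1+o(1))\sim\epsilon^{2a}$, so the gap $|\lambda_a-\lambda_b|$ is of order $\epsilon^{2\min(a,b)}$, whereas the off-diagonal coupling $|\tilde C_\epsilon[a,b]|$ is at most $\epsilon^{a+b}$; the ratio is $O(\epsilon^{|a-b|})\to 0$. Writing the $a$-th eigenvector as $v_a(\epsilon)$ with components $v_a^{(b)}(\epsilon)$ in the Frenet basis and balancing the eigenvalue equation $\sum_b \tilde C_\epsilon[c,b]v_a^{(b)}=\lambda_a v_a^{(c)}$ row by row yields $v_a^{(b)}(\epsilon)=O(\epsilon^{|a-b|})$ for $b\neq a$ and $v_a^{(a)}(\epsilon)\to 1$ after normalization. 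Translating back via $E$ gives $u_a(t)=\lim_{\epsilon\to 0}E v_a(\epsilon)=\pm e_a(t)$, as claimed.

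The main obstacle is the perturbation step. The off-diagonal entries are the \emph{same} order as the geometric mean $\sqrt{\tilde C_\epsilon[a,a]\,\tilde C_\epsilon[b,b]}$, so any naive diagonal-dominance or Gershgorin bound fails outright: the rescaled matrix $D_\epsilon^{-1}\tilde C_\epsilon D_\epsilon^{-1}$ (with $D_\epsilon=\mathrm{diag}(\epsilon,\epsilon^{2},\dots,\epsilon^{n})$) has $O(1)$ off-diagonal entries and is \emph{not} diagonal in the limit. What rescues the argument is the crucial observation that each eigenvalue gap coincides in order with the \emph{larger} of the two diagonals, not their geometric mean, and this yields the decisive $\epsilon^{|a-b|}$ ratio governing the off-diagonal coupling in the eigenvalue equation.
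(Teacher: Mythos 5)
Your overall route---pass to the Frenet--Serret basis, read off the $\epsilon$-orders of the entries of $\tilde C_\epsilon=E^TC_\epsilon E$, then run a perturbation/bootstrap argument on the eigenvalue equation---is genuinely different from the paper's proof, which instead observes that $\epsilon^{-2}C_\epsilon$ converges to the rank-one matrix $\tfrac13\gamma^{(1)}(t)\gamma^{(1)}(t)^T$, concludes $u_1=\pm e_1$, and then repeatedly deflates with $P_i=I-e_ie_i^T$ so that at each stage only a rank-one leading term survives. However, your version has a genuine gap at the eigenvalue step. The claim $\lambda_a=\tilde C_\epsilon[a,a](1+o(1))$ is false for $a\ge 3$: since $\tilde C_\epsilon[1,3]\sim\epsilon^4$ while $\lambda_1\sim\epsilon^2$, the coupling to the first Frenet direction feeds back into the third eigenvalue at order $\epsilon^8/\epsilon^2=\epsilon^6$, i.e.\ at the \emph{same} order as the diagonal entry, and it shifts the leading coefficient. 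Concretely, for the arclength helix $(a\cos\alpha t,\,a\sin\alpha t,\,bt)$ the paper records $\lambda_3=\tfrac{1}{1575}a^2\alpha^6b^2\epsilon^6+O(\epsilon^8)$, whereas the $(3,3)$ entry in the Frenet basis is $\tfrac{1}{252}a^2\alpha^6b^2\epsilon^6+O(\epsilon^8)$; the correct coefficient is the third pivot (Schur complement) of the moment matrix, exactly as in the Solis lemma quoted in Section 5, not the diagonal entry. This is the very phenomenon you flag in your last paragraph, so ``min--max plus the trace identity'' cannot deliver the $(1+o(1))$ statement: the trace controls sums of eigenvalues and gives no lower bound on an individual $\lambda_a$. (Relatedly, the sentence asserting that each off-diagonal entry is of strictly higher order than both $\epsilon^{2a}$ and $\epsilon^{2b}$ is incorrect---$\epsilon^{a+b}$ is smaller than $\epsilon^{2\min(a,b)}$ but larger than $\epsilon^{2\max(a,b)}$; your closing paragraph states the correct picture.)

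The good news is that your eigenvector bootstrap only needs the orders $\lambda_a\asymp\epsilon^{2a}$, hence gaps of order $\epsilon^{2\min(a,b)}$, not the leading constants. The upper bound $\lambda_a=O(\epsilon^{2a})$ follows from Courant--Fischer applied to the span of the last $n-a+1$ Frenet directions, but the matching lower bound $\lambda_a\ge c\,\epsilon^{2a}$ with $c>0$ is where the real content lies: by interlacing it reduces to showing the smallest eigenvalue of the leading $a\times a$ block of $\tilde C_\epsilon$ is $\gtrsim\epsilon^{2a}$, which requires its determinant to be $\asymp\epsilon^{a(a+1)}$ with a \emph{positive} constant; that constant involves the regularity of order $n$ (through $r_{aa}>0$) together with positivity of the relevant Hankel-type minors of the moment matrix $\mathcal E$ (which is positive definite as a Gram matrix)---in effect the positivity of the determinants $B_a$ that the paper handles via Solis's lemma and the orthogonal-polynomial machinery. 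If you supply that lower bound (or simply adopt the paper's deflation scheme, which sidesteps eigenvalue asymptotics entirely by making each successive leading term rank one), your row-by-row balancing argument for $v_a^{(b)}=O(\epsilon^{|a-b|})$ can be made rigorous and the conclusion $u_a=\pm e_a$ follows.
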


\begin{proof}
Let $\Gamma(t)$ denote the matrix whose columns are $\gamma^{(1)}(t), \dots, \gamma^{(n)}(t)$. The Frenet-Serret frame, $e_1(t), \dots, e_n(t)$, is obtained by applying the Gram-Schmidt process to the columns of $\Gamma(t)$. Thus $e_i(t)$ is a unit vector orthogonal to the span of  $\gamma^{(1)}(t), \dots, \gamma^{(i-1)}(t)$ but lying within the span of $\gamma^{(1)}(t), \dots, \gamma^{(i)}(t)$. Let $\mathbf v$ be the $n \times 1$ vector whose $k^{th}$ component is $(s-t)^k/k!$. Then $\Gamma(t)\mathbf v$ is the $n^{th}$ order Taylor series expansion for $\gamma(s)-\gamma(t)$ at $t$. Replacing $\gamma(s)-\gamma(t)$ with its Taylor series expansion leads to the $n^{th}$ order approximation
$$C_{\epsilon}(t)=\frac{1}{2\epsilon} \int_{t -\epsilon}^{t + \epsilon} 
(\gamma(s)-\gamma(t)) (\gamma(s)-\gamma(t))^Tds\approx \frac{1}{2\epsilon} \int_{t -\epsilon}^{t + \epsilon} (\Gamma(t)\mathbf v)(\Gamma(t)\mathbf v)^T\ ds.$$
We rewrite this as $$ \Gamma(t)\ \frac{1}{2\epsilon} \int_{t -\epsilon}^{t + \epsilon} \mathbf v \mathbf v^T \ ds\  \Gamma(t)^T = \Gamma(t)\ \mathcal E\  \Gamma(t)^T.$$ By the definition of $\mathcal E$, we compute that $$\mathcal E_{i,j}=\frac {\epsilon^{i+j}}{i! j! (i+j+1)} \hskip .1in {\rm if} \ i+j \ {\rm is \ even} \hskip .1in {\rm and} \hskip .1in \mathcal E_{i,j}=0 \hskip .1in {\rm if} \ i+j \ {\rm is \ odd.}$$
 We can express $\Gamma(t)\ \mathcal E\  \Gamma(t)^T$ in terms of the columns of $\Gamma(t)$ and the entries of $\mathcal E$ as
 $$\frac{\epsilon^2}{3}(c_1c_1^T) + \frac{\epsilon^4}{5}(\frac{1}{6}c_1c_3^T+\frac{1}{4}c_2c_2^T+\frac{1}{6}c_3c_1^T)+ \dots + \frac{\epsilon^{2k}}{2k+1}\sum_{i=1}^{2k-1}\frac{1}{i!(2k-i)!}c_ic_{2k-i}^T+\dots$$
 where $c_i=\gamma^{(i)}(t)$. As $\epsilon$ tends towards zero, this expression behaves more and more like the rank one matrix $\frac{\epsilon^2}{3}c_1c_1^T$. Noting that $c_1=\gamma^{(1)}(t)$, thus is a multiple of $e_1(t)$, we get $u_1(t)=\pm e_1(t)$. Let $P_1=I-e_1(t)e_1(t)^T$. Pre and post multiplying $\Gamma(t)\ \mathcal E\  \Gamma(t)^T$ with $P_1$ deflates away all terms involving $c_1$. More precisely, $$P_1\ \Gamma(t)\ \mathcal E\  \Gamma(t)^T\ P_1=\frac{\epsilon^4}{5}(\frac{1}{4}P_1c_2c_2^TP_1)+ \dots + \frac{\epsilon^{2k}}{2k+1}\sum_{i=2}^{2k-2}\frac{1}{i!(2k-i)!}P_1c_ic_{2k-i}^TP_1+\dots$$
 As $\epsilon$ tends towards zero, this deflated matrix behaves more and more like the rank one matrix $\frac{\epsilon^4}{5}(\frac{1}{4}P_1c_2c_2^TP_1)$. Noting that $P_1c_2=P_1\gamma^{(2)}(t)$, we see that  $P_1c_2$ is orthogonal to $\gamma^{(1)}(t)$ and is in the span of $\gamma^{(1)}(t), \gamma^{(2)}(t)$ thus is a multiple of $e_2(t)$. This leads to $u_2(t)=\pm e_2(t)$. We now pre and post multiply $P_1\ \Gamma(t)\ \mathcal E\  \Gamma(t)^T\ P_1$ with $P_2=I-e_2(t)e_2(t)^T$. Note that since $e_1(t)$ is orthogonal to $e_2(t)$, we have  $P_2P_1=I-e_1(t)e_1(t)^T-e_2(t)e_2(t)^T$. As $\epsilon$ tends towards zero, this doubly deflated matrix behaves more and more like the rank one matrix $\frac{\epsilon^6}{7}(\frac{1}{36}P_2P_1c_3c_3^TP_1P_2)$. Noting that $P_2P_1c_3=P_2P_1\gamma^{(3)}(t)$, we see that $P_2P_1c_3$ is orthogonal to the span of $\gamma^{(1)}(t), \gamma^{(2)}(t)$ but in the span of $\gamma^{(1)}(t), \gamma^{(2)}(t), \gamma^{(3)}(t)$ thus is a multiple of $e_3(t)$. This leads to $u_3(t)=\pm e_3(t)$. Continuing to deflate away previously found singular vectors, we obtain the relationship $e_i(t)=\pm u_i(t)$ for all $i$. Note that for this to work, $\mathcal E_{i,i}$ must be non-zero and $P_iP_{i-1}\cdots P_1\gamma^{(i+1)}(t)$ must be non-zero for each $i$. These conditions are satisfied since $\mathcal E_{i,i}=\frac{\epsilon^{2i}}{(2i+1)i!i!}$ and $\gamma$ is regular of order $n$ thus $\gamma^{(1)}(t), \dots, \gamma^{(n)}(t)$ are linearly independent.
\end{proof}

The previous theorem considered the relationship between the local singular vectors of a curve and the Frenet-Serret frame of a curve. We now consider the relationship between the local singular values of a curve and values of the curvature functions. More precisely, in the singular value decomposition $$C_{\epsilon}(t)=U_{\epsilon}(t) \Sigma_{\epsilon}(t)U^T_{\epsilon}(t)$$ we considered the limiting behavior of $U_{\epsilon}(t)$, as $\epsilon$ tends towards zero, in order to obtain the local singular vectors. We now consider the limiting behavior of $\Sigma_{\epsilon}(t)$ as $\epsilon$ tends towards zero. Note that the entries of $\Sigma_{\epsilon}(t)$ are the eigenvalues of $C_{\epsilon}(t)$ and that they  tend towards zero as $\epsilon$ tends towards zero. Let $\lambda_{i,\epsilon}(t)$ denote the $i^{th}$ diagonal entry of $\Sigma_{\epsilon}(t)$. We show that for some constant $c_i$, we can write $$\lambda_{i,\epsilon}(t)=c_i\epsilon^{2i} + O \left( {\epsilon}^{2i+2} \right).$$
The local singular values of $\gamma(t)$ are then defined as $\sigma_i(t)=\sqrt{c_i}\epsilon^i$.

In Section 2, we have explicitly expressed the curvature, for curves with constant curvature functions, in terms of the parameters of the curves. We now express the leading terms of the eigenvalues $\lambda_{i,\epsilon}(t)$ in terms of the parameters of the curves. This allows us to derive a relationship of the form
\begin{equation}
\kappa^2_i(t)=a_i  \lim_{\epsilon\rightarrow 0}\frac{\lambda_{i+1,\epsilon}(t)}{\lambda_{1,\epsilon}(t)\lambda_{i,\epsilon}(t)},
\end{equation} where $a_i$ is a constant with known value.
From this we obtain $$\kappa_i(t)=\sqrt{a_i } \frac{\sigma_{i+1}(t)}{\sigma_1(t)\sigma_i(t)}.$$

\subsection{Two dimensions}

Consider a two dimensional curve with constant curvature $\kappa_1=1/a$. This will be a circle of radius $a$. Up to translation, its parameterized form is 
 $\gamma(s) = (a\cos( \alpha s), a\sin(\alpha s))$. If we assume that the circle is parameterized by arc length then we obtain the constraint $a^2\alpha^2=1$.
The components of the covariance matrix $C_{\epsilon}(0)$ are:
$$C_{11}= \frac{1}{2\epsilon } \int_{-\epsilon}^{\epsilon} 
(a\cos(\alpha s)-a)^2  ds,$$
$$C_{22} = \frac{1}{2\epsilon} \int_{-\epsilon}^{\epsilon} a^2 \sin^2(\alpha s) ds,$$
with 
$$C_{12} = C_{21} = \frac{1}{2\epsilon} \int_{-\epsilon}^{\epsilon}  (a\cos(\alpha s)-a) \sin(s) ds = 0$$
since the integrand is an odd function.

We follow the usual convention of ordering the eigenvalues by decreasing magnitude so
\begin{align*}
&\lambda_{1,\epsilon}(0) =\frac{1}{3}  a^2 \alpha^2 \epsilon^2+O(\epsilon^4),\\
&\lambda_{2,\epsilon}(0) =\frac{1}{20}a^2 \alpha^4 \epsilon^4+O(\epsilon^6),\\
&\lim_{\epsilon \rightarrow 0} \frac{\lambda_{2,\epsilon}(0)}{\lambda^2_{1,\epsilon}(0)} = 
{\frac{9}{20 a^2}}.
\end{align*}
Given that the curvature
$\kappa_1 = 1/a$,
 we obtain the following expression for $\kappa_1$ in terms of the
local singular values of the circle:
$$
\label{k1_ev}
\kappa_1 = \sqrt{\frac{20}{9} }  \frac{\sigma_2}{\sigma_1^2} =\frac{\sqrt{20}}{3} \frac{\sigma_2}{\sigma_1^2}.
$$

\subsection{Three and Four dimensions}
Here we consider curves in $\mathbb R^3$ with constant $\kappa_1,\kappa_2$. Up to translation, such a curve will have the form 
$$\gamma(s) = (a\cos(\alpha s), a \sin( \alpha s), bs).$$

Assuming the curve is parameterized by arc length we have $a^2\alpha^2 +b^2=1$. The covariance matrix, $C_{\epsilon}(t)$, is a $3 \times 3$ matrix with eigenvalues
\begin{align*}
\lambda_1 &= \frac{1}{3} \epsilon^2 +O(\epsilon^4)\\
\lambda_2 &= \frac{1}{20}a^2\alpha^4 \epsilon^4  +O(\epsilon^6)\\
\lambda_3 &= \frac{1}{1575} a^2\alpha^6b^2\epsilon^6+O(\epsilon^{8})
\end{align*}

Recalling from Section 2 the equations for $\kappa_1,\kappa_2$ in terms of the parameters $a,\alpha,b$, we obtain
$$
\label{k2_ev}
\kappa_1^2 = \frac{20}{9}  \lim_{\epsilon \rightarrow 0} \frac{\lambda_{2,\epsilon}(t)}{\lambda^2_{1,\epsilon}(t)}, \hskip .5in \kappa_2^2  =  \frac{105}{4}\lim_{\epsilon \rightarrow 0} \frac{\lambda_{3,\epsilon}(t)}{\lambda_{1,\epsilon}(t) \lambda_{2,\epsilon}(t)}.
$$
This leads to the expression of $\kappa_1,\kappa_2$ in terms of the singular values as:

$$\kappa_1=\frac{\sqrt{20}}{3} \frac{\sigma_2}{\sigma_1^2}  \hskip .5in {\rm and} \hskip .5in \kappa_2=\frac{\sqrt{105}}{2} \frac{\sigma_3}{\sigma_1 \sigma_2}.$$

Similarly for curves in $\mathbb R^4$, using elimination theory we establish the following representations of the $\kappa_i$ in terms of the local singular values:

$$\kappa_1=\frac{\sqrt{20}}{3} \frac{\sigma_2}{\sigma_1^2} , \hskip .1in \kappa_2=\frac{\sqrt{105}}{2} \frac{\sigma_3}{\sigma_1 \sigma_2}, \hskip .1in \kappa_3=\frac{\sqrt{336}}{5} \frac{\sigma_4}{\sigma_1 \sigma_3}. $$

\subsection{Higher dimensions}\label{patterns}

Given that many of the entries of $C_{\epsilon}(0)$ are odd functions,
the covariance matrix has a special structure with many zero entries.
For instance, the structure of the covariance matrix for  $n=6$ is
 
$$ \left[ \begin {array}{cccccc} C_{11}&0& C_{13}&0&C_{15}&0
\\ \noalign{\medskip}0&C_{22}&0&C_{24}&0&C_{26}
\\ \noalign{\medskip}C_{31}&0&C_{33}&0&C_{35}&0
\\ \noalign{\medskip}0&C_{42}&0&C_{44}&0&C_{46}
\\ \noalign{\medskip}C_{51}&0&C_{53}&0&C_{55}&0
\\ \noalign{\medskip}0&C_{62}&0&C_{64}&0&C_{66}\end {array}
 \right] $$

We can permute the columns and rows of this matrix
an even number of times to obtain the block matrix
$$
\left[ \begin {array}{cccccc}  C_{11}&C_{13}&C_{15} & 0 & 0&0
\\ \noalign{\medskip}C_{31}&C_{33}&C_{35} & 0 & 0&0
\\ \noalign{\medskip}C_{51}&C_{53}&C_{55} & 0 & 0&0
\\ \noalign{\medskip}0&0&0&C_{22}&C_{24}&C_{26}
\\ \noalign{\medskip}0&0&0&C_{24}&C_{44}&C_{46}
\\ \noalign{\medskip}0&0&0&C_{26}&{ C_{46}}&C_{66}\end {array}
 \right] $$

Thus we observe the more computationally efficient approach to computing the
eigenvalues by computing the eigenvalues of the block submatrices.

For curves in $\mathbb R^5$ we obtain:
\begin{equation}
	\kappa_1=\frac{\sqrt{20}}{3} \frac{\sigma_2}{\sigma_1^2} , \hskip .1in \kappa_2=\frac{\sqrt{105}}{2} \frac{\sigma_3}{\sigma_1 \sigma_2}, \hskip .1in \kappa_3=\frac{\sqrt{336}}{5} \frac{\sigma_4}{\sigma_1 \sigma_3}, \hskip .1in \kappa_4=\dfrac{\sqrt {825}}{4} \frac{\sigma_5}{\sigma_1 \sigma_4}.\end{equation}
And for curves in $\mathbb R^6$ the same expressions for $\kappa_1, \kappa_2, \kappa_s, \kappa_4$ hold plus the additional relationship \begin{equation}\kappa_5=\dfrac{\sqrt {1716}}{7} \frac{\sigma_6}{\sigma_1 \sigma_5}.\end{equation}

Throughout this section, we have assumed the curve to be parameterized with respect to arc length. The local computations can still be made without this assumption. What would change in the formulas in the previous section is that we would replace the assumption that $||\gamma^{(1)}(t_0)||=1$ with $||\gamma^{(1)}(t_0)||=r$. We obtain the same connection between the higher curvature functions and {\it ratios} of singular values. We summarize these results in the following theorem whose general proof for all dimensions is given in section \ref{proofbigt}:

\begin{theorem} \label{bigt}
Let $\gamma: I \rightarrow \mathbb R^n$ be a parametric curve of class $C^{n+1}$, regular of order $n$ for any $n\in\mathbb N$. Let $\kappa_j(t)$ denote the $j^{th}$ curvature function of $\gamma$ evaluated at $t$ and let $\sigma_j(t)$ denote the $j^{th}$ local singular value of $\gamma$ at $t$. For each $t\in I$ and each $j<n$,  \begin{equation}\kappa_j(t)=\sqrt{a_j}\frac{\sigma_{j+1}(t)}{\sigma_1(t) \sigma_j(t)}\hskip .1in {\rm with} \hskip .1in a_{j-1} = \left(\dfrac{j}{j+(-1)^j}\right)^2 {\dfrac{4j^2-1}{3}}.\end{equation}
\end{theorem}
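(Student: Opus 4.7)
The plan is to reduce the computation of the leading-order eigenvalues of $C_\epsilon(t)$ to the evaluation of Hankel determinants in an explicit moment sequence, and then to use the theory of monic orthogonal polynomials to obtain a closed-form recursion that produces the constants $a_j$.

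First I would carry over the Taylor-expansion setup from the proof of Theorem~\ref{big}, working directly in the Frenet-Serret basis at $\gamma(t)$, which is legitimate because Theorem~\ref{big} already identifies $u_i(t)$ with $\pm e_i(t)$. Writing $\Gamma(t) = E(t)R(t)$ for the QR decomposition arising from Gram-Schmidt, the $n$-th order approximation of $C_\epsilon(t)$ in the Frenet-Serret frame becomes $R\,\mathcal{E}\,R^T$, where $\mathcal{E}_{ij} = \epsilon^{i+j}/(i!\,j!\,(i+j+1))$ when $i+j$ is even and $0$ otherwise. The parity structure displayed in Subsection~\ref{patterns} is already inherent in $\mathcal{E}$: after reordering indices by parity, the matrix splits into an even and an odd block, each of genuine Hankel form in the moments $m_k = 1/(2k+1)$ of the uniform probability measure $\tfrac12\,dy$ on $[-1,1]$.

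The next step is to extract the leading coefficient $c_i$ in $\lambda_{i,\epsilon}(t) = c_i\epsilon^{2i} + O(\epsilon^{2i+2})$. Since $\mathcal{E}_{ij} \sim \epsilon^{i+j}$, conjugating by $D = \mathrm{diag}(\epsilon^k/k!)$ isolates an $\epsilon$-free pencil of principal minors, and a standard asymptotic argument identifies $c_i$ with the ratio $\det H_i / \det H_{i-1}$ of Hankel determinants of the appropriate parity block, modulated by a product of the diagonal Gram-Schmidt coefficients $R_{ii} = \|\tilde{e}_i(t)\|$. Forming the ratio $c_{j+1}/(c_1 c_j)$, the $R_{ii}$ factors can be eliminated by invoking the constant-curvature relations of Section~2 for the helical best-fit curve (which agrees with $\gamma$ through order $n$ at $t$), producing the stated curvature expression up to a universal combinatorial factor depending only on $j$.

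The main obstacle is the evaluation of this universal factor, namely the Hankel-determinant ratio
\[
\frac{\det H_{j+1}\, \det H_{j-1}}{\det H_j^{\,2}},
\]
in closed form, taking account of the parity alternation between the two blocks. I would handle it using the classical identity $\det H_i = \prod_{k=0}^{i-1} \gamma_k$, where $\gamma_k$ is the squared $L^2$-norm of the $k$-th monic orthogonal polynomial for the moment functional in question; for the measure $\tfrac12\,dy$ on $[-1,1]$ these are the monic Legendre polynomials, whose recurrence coefficients are known explicitly. Combining the contributions of the two parity blocks yields the factor $(4j^2-1)/3$ from the Legendre norms and the factor $(j + (-1)^j)^{-2}$ from the index offset that occurs when one passes between blocks, giving the stated $a_{j-1} = \bigl(j/(j+(-1)^j)\bigr)^{2}(4j^2-1)/3$. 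A final check that the omitted higher-order Taylor terms affect $\lambda_{i,\epsilon}(t)$ only at orders $\epsilon^{2i+2}$ and beyond shows that the formula, established on the best-fit curve, lifts to any $C^{n+1}$ curve regular of order $n$.
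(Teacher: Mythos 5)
Your overall architecture is the same as the paper's: Taylor-expand $C_\epsilon(t)$ to get $\Gamma\,\mathcal{E}\,\Gamma^T$, identify the leading coefficient of $\lambda_{i,\epsilon}$ as a ratio of Hankel determinants (a pivot) times the squared Gram--Schmidt diagonal $R_{ii}^2/(i!)^2$, use $R_{ii}=\kappa_1\cdots\kappa_{i-1}$ (for arc length) so that $c_{j+1}/(c_1c_j)$ isolates $\kappa_j^2$, and then evaluate the resulting Hankel-determinant ratio by orthogonal-polynomial theory. The paper imports the eigenvalue asymptotics from Solis's lemma rather than re-deriving them, but your sketch of that part is essentially a re-derivation of the same statement and is plausibly completable (the detour through the constant-curvature best-fit curve of Section 2 is unnecessary once you know $R_{ii}=\kappa_1\cdots\kappa_{i-1}$, and you still owe the check that the Taylor remainder perturbs $\lambda_{i,\epsilon}$ only at order $\epsilon^{2i+2}$, which the paper also delegates to the cited lemma).

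The genuine gap is in the final, essential computation. The Hankel determinants that occur are \emph{not} those of the Legendre moment sequence $1,0,\tfrac13,0,\tfrac15,\dots$: because the matrix indices start at $i=j=1$, the sequence is $\tfrac13,0,\tfrac15,0,\tfrac17,\dots$, i.e.\ the moments of the weight $x^2\cdot\tfrac12\,dx$ on $[-1,1]$; equivalently, after the parity split the two blocks are Hankel matrices of $\{\tfrac1{2k+3}\}$ and $\{\tfrac1{2k+5}\}$, which are moment sequences of the Jacobi-type weights $\tfrac12 t^{1/2}\,dt$ and $\tfrac12 t^{3/2}\,dt$ on $[0,1]$. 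So the classical identity $\det H_i=\prod_{k<i}\gamma_k$ applies with the norms of \emph{those} orthogonal polynomials, not of monic Legendre polynomials. The discrepancy is not cosmetic: for the first block the $2\times 2$ determinant is
\begin{equation*}
\det\begin{pmatrix} \tfrac13 & \tfrac15\\[2pt] \tfrac15 & \tfrac17\end{pmatrix}=\frac{4}{525},
\end{equation*}
whereas the product of monic Legendre norms for the uniform measure gives $\tfrac13$, so the Legendre-based evaluation fails already at $n=2$. Consequently your attribution of the factor $(4j^2-1)/3$ to ``Legendre norms'' and of $(j+(-1)^j)^{-2}$ to an ``index offset between blocks'' is not a derivation but a pattern-match to the known answer; in a correct computation the whole factor $(j+(-1)^j)^2/(4j^2-1)$ arises as the recurrence coefficient $\beta_{j-1}=B_jB_{j-2}/B_{j-1}^2$ of the weight $x^2\,dx$ (the paper notes these equal one quarter of the Jacobi-$(\beta/\alpha-1,0)$ coefficients, precisely \emph{not} Legendre), while the $j^2$ and the $\tfrac13$ come from the $(j!)^2$ factors and from $B_1=\lambda_1$'s leading coefficient. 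To close the gap you must either identify the correct weight and compute its recurrence coefficients (e.g.\ via the even/odd Jacobi decomposition of a symmetric weight), or do what the paper does: compute the block determinants $F_m(\alpha,\beta)$ in closed form for general inverse-arithmetic sequences via Selberg's integral and then deduce the recursion for the alternating-zero sequence from the block factorization $B_{2m}=F_m(\alpha,\beta)F_m(\alpha,\beta+\alpha)$, $B_{2m-1}=F_m(\alpha,\beta)F_{m-1}(\alpha,\beta+\alpha)$ with $\alpha=2$, $\beta=3$.
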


\
 
The formula straightforwardly reproduces the results obtained above for the coefficients $$a_1=\frac{20}{9},\; a_2=\frac{105}{4},\; a_3=\frac{336}{25},\; a_4=\frac{825}{16},\; a_5=\frac{1716}{49}.$$ The proof of the general case requires the theory of Hankel determinants using orthogonal polynomials, which is reviewed in section \ref{gen_proof}.
Perhaps surprisingly, the numerator of this series arises in the number
of Kekul\'e structures in benzenoid hydrocarbons \cite{cyvin2013kekule} and the degrees of projections of rank loci \cite{aluffi2014degrees}.


\section{An example}
We consider the twisted cubic curve in $\mathbb R^3$ given parametrically as $\gamma(t)=[t,t^2,t^3]$. The Frenet-Serret frame can be shown to be:
\begin{equation*}
e_1(t) = \bmatrix \frac{1}{\sqrt{1+4t^2+9t^4}}\\ \frac{2t}{\sqrt{1+4t^2+9t^4}}\\ \frac{3t^2}{\sqrt{1+4t^2+9t^4}}\endbmatrix \hskip .2in e_2(t) = \bmatrix \frac{t(2+9t^2)}{\sqrt{1+4t^2+9t^4}\sqrt{1+9t^2+9t^4}}\\ \frac{1-9t^4}{\sqrt{1+4t^2+9t^4}\sqrt{1+9t^2+9t^4}}\\ \frac{3t+6t^3}{\sqrt{1+4t^2+9t^4}\sqrt{1+9t^2+9t^4}}\endbmatrix  \hskip .2in e_3(t) = \bmatrix \frac{3t^2}{\sqrt{1+9t^2+9t^4}}\\ \frac{-3t}{\sqrt{1+9t^2+9t^4}}\\ \frac{1}{\sqrt{1+9t^2+9t^4}}\endbmatrix\end{equation*}
while the functions $\kappa_1(t),\kappa_2(t)$ can be shown to be
$$\kappa_1(t)=\frac{2\sqrt{1+9t^2+9t^4}}{(1+4t^2+9t^4)^{3/2}}, \hskip .3in \kappa_2(t)=\frac{3}{1+9t^2+9t^4}.$$

Let $\epsilon = .001$ and let $t=3$. If we consider the singular value decomposition $C_{\epsilon}(t)=U_{\epsilon}(t) \Sigma_{\epsilon}(t)U^T_{\epsilon}(t)$ for $\gamma(t)$ then we can consider the singular vectors of $C_{\epsilon}(t)$ as a proxy for the local singular vectors of $\gamma(t)$ at $t=3$ and compare to the exact value for $e_i(t)$ at $t=3$. For instance, comparing the first singular vector to the first frame vector, we get
\begin{equation*}
u_{1,\epsilon}(3)=\bmatrix .036131465\\.216788800\\ .975549656\endbmatrix \hskip .2in e_1(3)=\bmatrix .036131468\\.216788812\\ .975549654\endbmatrix .
\end{equation*}
The other singular vectors, $u_{2,\epsilon}(3),u_{3,\epsilon}(3)$ are similarly close to $e_2(3),e_3(3)$. If we consider $$\sqrt{a_i}\frac{\sqrt{\lambda_{i+1,\epsilon}(t)}}{\sqrt{\lambda_{1,\epsilon}(t)}\sqrt{\lambda_{i,\epsilon}(t)}} \hskip .1in {\rm as \ a \ proxy\  for } \hskip .1in \kappa_i=\sqrt{a_i}\frac{\sigma_{i+1}(t)}{\sigma_1(t) \sigma_i(t)}$$ then we obtain the following estimates:
$$\kappa_1(3)\approx .0026865640, \ \ \kappa_2(3)\approx .0036991369,$$ whereas using the exact formulas, we can compare these values to
$$\kappa_1(3)= .0026865644..., \ \ \kappa_2(3) = .0036991368...$$
For these approximations, we used $\epsilon=10^{-3}$. With a choice of $\epsilon=10^{-6}$, for this example, we observed about 13 digits of accuracy. This example illustrates how the theorems of the previous section can be used to obtain very good approximations of both the Frenet-Serret frame and values of the curvature functions by considering small values of $\epsilon$.


\section{Hankel Matrices and Orthogonal Polynomials}\label{gen_proof}

After the previous explicit examples were worked out, we conjectured the formula of \ref{bigt} for $a_j$ and numerically verified the result for $\kappa_6, \kappa_7, \kappa_8$ by generating curves with prescribed non-constant curvature and solving the system $E^\prime = EK$ numerically; then, the local singular values were numerically approximated from the numerically
generated curves. The general proof is based on the following key result by F.J. Solis \cite{solis} for the expansions to leading order of the singular values:
\begin{lemma}
	Let $\gamma(t)$ be a regular curve in $\mathbb R^n$, and let $P_0$ be a point on the curve, then the eigenvalues associated with $C_\epsilon$ at $P_0$ are given by 
\begin{align*}
	& \lambda_1^\epsilon = p_1\epsilon^2 + O(\epsilon^4), \\
	& \lambda_j^\epsilon = \frac{(\kappa_1\cdots\kappa_{j-1} )^2}{(j!)^2}p_j\epsilon^{2j} + O(\epsilon^{2j+2}),\quad\quad j=2,\dots, n
\end{align*}
and the eigenvectors are given by the Frenet frame at $P_0$. The $\kappa_i$'s
 are the higher curvatures of the curve and $p_k$ is the k-th $(k=1,\dots,n)$ pivot of the $n\times n$ matrix $A_n$ defined by
 $$
A_{ij} = \begin{cases} 
      \frac{1}{i+j+1}, & \text{if } i+j\text{ is even}; \\
      0 & \text{otherwise}.
   \end{cases}
 $$
\end{lemma}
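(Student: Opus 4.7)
The plan is to build on the Taylor-expansion machinery already used in the proof of Theorem~\ref{big}. Replacing $\gamma(s)-\gamma(t)$ by its order-$n$ Taylor expansion $\Gamma(t)\mathbf{v}$ (with $\mathbf{v}_k=(s-t)^k/k!$) and integrating gives
$C_\epsilon(t) = \Gamma(t)\mathcal{E}\Gamma(t)^T + O(\epsilon^{2n+2})$,
where $\mathcal{E}_{ij} = \epsilon^{i+j}/(i!\,j!\,(i+j+1))$ for $i+j$ even and $0$ otherwise (odd moments of $s-t$ over the symmetric interval vanish, so the whole expansion is in even powers of $\epsilon$). Factor $\Gamma(t) = E(t)M(t)$ with $E$ the orthogonal Frenet frame and $M$ upper triangular with diagonal entries $M_{ii} = \kappa_1\cdots\kappa_{i-1}$ (a routine induction from $\gamma^{(i)} = \kappa_1\cdots\kappa_{i-1}\,e_i + \text{lower}$). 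Orthogonal invariance then reduces the eigenvalue analysis of $C_\epsilon$ to that of $\widetilde C_\epsilon := M\mathcal{E}M^T$, with eigenvectors of $C_\epsilon$ obtained by applying $E$ to those of $\widetilde C_\epsilon$.

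The crucial feature of $\widetilde C_\epsilon$ is its grading: using the upper triangularity of $M$ and the checkerboard of $\mathcal{E}$, entry $(k,l)$ equals $\epsilon^{k+l}\,M_{kk}M_{ll}/(k!\,l!\,(k+l+1)) + O(\epsilon^{k+l+2})$ when $k+l$ is even, and is $O(\epsilon^{k+l+1})$ (with subsequent corrections in steps of $\epsilon^2$) when $k+l$ is odd. The leading-order coefficient matrix is $X_0 = K A_n K$ with $K = \mathrm{diag}(M_{ii}/i!)$ and $A_n$ as in the statement; $X_0$ is positive definite because each parity block of $A_n$ is a Hilbert submatrix and $K$ is invertible by regularity of $\gamma$.

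The heart of the proof is a general graded-matrix lemma: for a positive definite family $X_\epsilon$ with $(X_\epsilon)_{kl} = X_{0,kl}\epsilon^{k+l} + O(\epsilon^{k+l+2})$ on even $k+l$ entries (and $O(\epsilon^{k+l+1})$ on odd ones) and $X_0$ positive definite, the eigenvalues satisfy $\lambda_k(X_\epsilon) = q_k\epsilon^{2k} + O(\epsilon^{2k+2})$, where $q_k$ denotes the $k$-th pivot of $X_0$. I would prove this using elementary symmetric polynomials of the eigenvalues. Courant-Fischer against $\mathrm{span}(e_k,\ldots,e_n)$ yields $\lambda_k \le C\epsilon^{2k}$; combined with $\det(X_\epsilon) = \epsilon^{n(n+1)}\det(X_0)(1+O(\epsilon^2))$ this forces $\lambda_k = c_k\epsilon^{2k}(1+o(1))$ for some $c_k>0$. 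Next, expand $e_k(\lambda_1,\ldots,\lambda_n) = \sum_{|S|=k}\det(X_\epsilon[S])$; a short parity-counting argument shows that for any permutation $\sigma$ of $S$ the number of $i$ with $i+\sigma(i)$ odd is even, so $\det(X_\epsilon[S]) = \epsilon^{2\sum_{i\in S}i}\det(X_0[S])(1+O(\epsilon^2))$, and the sum over $S$ is uniquely dominated by $S = \{1,\ldots,k\}$, giving $e_k = \epsilon^{k(k+1)}\prod_{j\le k}q_j(1+O(\epsilon^2))$. Matching with $e_k = \sum_T\prod_{j\in T}\lambda_j$, also dominated by $T = \{1,\ldots,k\}$, yields $\prod_{j\le k}c_j = \prod_{j\le k}q_j(1+O(\epsilon^2))$; consecutive ratios then give $c_k = q_k + O(\epsilon^2)$.

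Applying the graded-matrix lemma to $\widetilde C_\epsilon$: if $A_n = L P L^T$ is the LDL$^T$ decomposition, then $K A_n K = (KLK^{-1})(K^2 P)(KLK^{-1})^T$ with $KLK^{-1}$ unit lower triangular, so the pivots of $X_0$ are $K_{ii}^2 p_i = (\kappa_1\cdots\kappa_{k-1})^2 p_k/(k!)^2$, which yields the claimed expansion for $\lambda_k$. The identification of the eigenvectors with the Frenet frame is precisely Theorem~\ref{big}. The principal obstacle is the graded-matrix lemma, where the parity-based vanishing (both in the entries of $\widetilde C_\epsilon$ and in the parity count $c(\sigma)$) is what secures the $O(\epsilon^{2k+2})$ remainder rather than an $O(\epsilon^{2k+1})$ one; once that lemma is in place the rest is LDL$^T$ bookkeeping.
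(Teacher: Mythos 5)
The paper itself offers no proof of this lemma: it is quoted from Solis \cite{solis}, with only a typo in the denominator corrected, so there is no internal argument to compare yours against. Judged on its own, your reconstruction has the right skeleton and is essentially sound: the factorization $\Gamma(t)=E(t)M(t)$ with $M_{ii}=\kappa_1\cdots\kappa_{i-1}$, the reduction to the graded matrix $M\mathcal{E}M^T$ whose leading coefficient matrix is $KA_nK$ with $K=\mathrm{diag}(M_{ii}/i!)$, the parity count showing every principal minor $\det(X_\epsilon[S])$ is $\epsilon^{2\sum_{i\in S}i}\bigl(\det(X_0[S])+O(\epsilon^2)\bigr)$, the Courant--Fischer upper bounds plus the determinant to pin the orders $\lambda_k\asymp\epsilon^{2k}$, and the observation that $KA_nK=(KLK^{-1})(K^2P)(KLK^{-1})^T$ identifies the pivots as $(\kappa_1\cdots\kappa_{k-1})^2p_k/(k!)^2$ all check out, and the eigenvector claim is legitimately delegated to Theorem~\ref{big}.

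The one genuine flaw is the opening claim $C_\epsilon(t)=\Gamma(t)\mathcal{E}\Gamma(t)^T+O(\epsilon^{2n+2})$. Truncating the Taylor series at order $n$ leaves a remainder $R(s)=O(|s-t|^{n+1})$, and the cross terms $\frac{1}{2\epsilon}\int\bigl((\Gamma\mathbf v)R^T+R(\Gamma\mathbf v)^T\bigr)ds$ are only $O(\epsilon^{n+2})$ entrywise, not $O(\epsilon^{2n+2})$; by Weyl's inequality an entrywise error of that size is too coarse to control eigenvalues of order $\epsilon^{2j}$ once $2j>n+2$, so as written neither the $O(\epsilon^{2j+2})$ remainders nor even the leading coefficients for the largest $j$ are secured. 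The fix stays entirely inside your framework: expand to order $2n+1$, so the true remainder contributes only $O(\epsilon^{2n+3})$ entrywise, and observe that the extended product $M_{2n+1}\mathcal{E}_{2n+1}M_{2n+1}^T$ (with $M_{2n+1}$ the $n\times(2n+1)$ coefficient matrix, triangular in its first $n$ columns) retains the graded, even-step structure with the same leading block $KA_nK$, because the extra derivative columns enter entry $(k,l)$ only at order $\epsilon^{(n+1)+l}$ and higher, and $\mathcal{E}$ still kills odd powers; your graded-matrix lemma then applies verbatim. Be explicit that this needs more smoothness than the paper's standing $C^{n+1}$ hypothesis (a caveat inherited from the lemma's statement, not created by you), and that $M_{ii}=\kappa_1\cdots\kappa_{i-1}$ presumes arc-length parameterization, so either assume unit speed or carry the factors of $\|\gamma^{(1)}\|$ through, since the statement says only ``regular curve.''
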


From his proof, a small typo is corrected for the denominator of $\lambda^\epsilon_j$ in the final statement. With this result we can express the curvatures $\kappa_j$ in terms of the singular values by expressing the  pivots as quotients of the determinants $B_j$ of $A_j$, that is $p_j=B_j/B_{j-1}$, so that:
\begin{equation}\label{solEq}
	\lim_{\epsilon\to 0}\frac{\lambda^\epsilon_{j+1}}{\lambda^\epsilon_1\lambda^\epsilon_j}=\kappa_j^2\frac{B_{j+1}B_{j-1}}{(j+1)^2B_1B_j^2}.
\end{equation}
The determinants $B_j$ are of Hankel type for the sequence $\{\mu_n\}_{n=0}^\infty=\{\frac{1}{3},0,\frac{1}{5},0,\frac{1}{7},...\}$
\begin{align*}
B_1 &= \frac{1}{3},\; B_2 = \left|\begin{array}{cc} \frac{1}{3} & 0 \\ 0 & \frac{1}{5}\end{array}\right| ,\; B_3 = \left|\begin{array}{ccc} \frac{1}{3} & 0 & \frac{1}{5} \\ 0 & \frac{1}{5} & 0 \\ \frac{1}{5} & 0 & \frac{1}{7} \end{array}\right|,\; B_j = \left|\begin{array}{ccccc} \mu_0 & \mu_1 & \mu_2 & \cdots & \mu_{j-1} \\ \mu_1 & \mu_2 & \mu_3 & \cdots & \mu_{j} \\ \mu_2 & \mu_3 & \mu_4 & \cdots & \mu_{j+1} \\ \vdots & \vdots & \vdots & & \vdots \\ \mu_{j-1} & \mu_j & \mu_{j+1} & \cdots & \mu_{2j-2} \end{array}\right|.
\end{align*}
Then to get our coefficient in \ref{bigt} amounts to showing that the aforementioned Hankel determinants satisfy the following recurrence relation:
\begin{equation}
\frac{B_jB_{j-2}}{(B_{j-1})^2} = \frac{(j+(-1)^j)^2}{4j^2-1}.
\end{equation}

This is indeed the case after we realize that such a recurrence relation appears in the theory of monic orthogonal polynomials generated from $\{x^n\}_{n=0}^\infty$ by Gram-Schmidt orthogonalization with respect to a measure giving our sequence $\mu_n$ as the integral moments. Indeed, choose a nondecreasing function $\lambda(x)$ on $\mathbb R$ having finite limits at $\pm\infty$ such that it induces a positive measure $d\lambda$ with finite moments to all orders
$$
\mu_n(d\lambda)= \int_{\mathbb R} x^n d\lambda(x),\quad n=0,1,2,...
$$
then apply the Gram-Schmidt orthogonalization procedure to $\{x^n\}_{n=0}^\infty$ using the scalar product 
$$
\langle p(x),q(x)\rangle = \int_{\mathbb R} p(x)q(x)d\lambda(x)
$$
to obtain a sequence of monic orthogonal polynomials $P_n(x)$ (without normalization). If the given scalar product is positive-definite, such a sequence is infinite and unique, and this is the case if $B_n>0$ for all $n\in\mathbb N$, see Gautschi \cite[th. 1.2, 1.6]{gautschi}. Moreover, in this case, the infinite sequence of monic orthogonal polynomials obtained in this manner obeys the recursion relation \cite[th. 1.27]{gautschi}:
\begin{equation}\label{recur}
P_{-1}(x)=0,\, P_{0}(x)=1,\;\quad P_{n+1}(x)=(x-\alpha_n)P_{n}(x) - \beta_n P_{n-1}(x)
\end{equation}
where $$\alpha_n=\frac{\langle P_n, xP_n \rangle}{\langle P_n, P_n\rangle},\quad \beta_n=\frac{\langle P_n, P_n \rangle}{\langle P_{n-1}, P_{n-1}\rangle}=\frac{||P_n(x)||^2}{||P_{n-1}(x)||^2}, \text{ for } n=1,2,\dots$$
The importance of this result is that the recursion coefficients $\beta_n$ are precisely the recursion coefficients of the Hankel determinants $B_n$ for the sequence $\mu_n$, as it is proved in \cite[eq. 2.1.5]{gautschi} 
	
\begin{equation}\label{recursion}
\beta_{j-1} = \frac{B_jB_{j-2}}{(B_{j-1})^2},\;\text{ for } n=2,3,\dots
\end{equation}
so finding a measure to reproduce our sequence as its moments and a way to compute the norms of the corresponding polynomials will yield our coefficient formula.
There is a fundamental determinantal representation of the monic orthogonal polynomials generated in the previous way \cite[th. 2.1]{gautschi}
$$
P_n(x)=\frac{1}{B_n}\left|\begin{matrix} \mu_0 & \mu_1 & \dots & \mu_{n} \\ \mu_1 & \mu_2 & \dots & \mu_{n+1} \\ \vdots & \vdots & & \vdots \\ \mu_{n-1} & \mu_{n} & \dots & \mu_{2n-1} \\ 1 & x & \vdots & x^n \end{matrix}\right|, \quad ||P_n(x)||^2= \frac{B_{n+1}}{B_{n}},
$$
that yields Heine's integral representation formula \cite[p. 288]{heine} by essentially pulling the integrals of each moment out of the determinant and expanding  
$$
P_n(x)=\frac{1}{n! B_n}\int\cdots\int_{\mathbb R^n}\prod_{i=1}^n (x-x_i)\prod_{1\leq l<k\leq n}(x_k-x_l)^2 d\lambda(x_1)\cdots d\lambda(x_n).
$$
Since the polynomials are monic, $B_n$ can be solved equating to $1$ the leading coefficient of the previous equation
\begin{equation}\label{heines}
B_n=\frac{1}{n!}\int_{\mathbb R^n}\prod_{1\leq l<k\leq n}(x_k-x_l)^2 d\lambda(x_1)\cdots d\lambda(x_n)
\end{equation}
which is a closed formula for all Hankel determinants of any sequence as long as this can be written as moments of a positive measure.

\section{Proof of Theorem \ref{bigt}}\label{proofbigt}

Using the theory above for Hankel determinants of particular type we arrive at the following key result.
\begin{theorem}
	For any inverse arithmetic sequence $\displaystyle \left\{\frac{1}{\alpha n+\beta}\right\}_{n=0}^\infty$, where $\alpha,\beta\in\mathbb{R}_{>0}$, the corresponding Hankel determinants
\begin{equation}
F_n(\alpha,\beta)=\left|\begin{array}{ccccc} \frac{1}{\beta} & \frac{1}{\alpha+\beta} & \frac{1}{2\alpha+\beta} & \cdots & \frac{1}{(n-1)\alpha+\beta} \\ \frac{1}{\alpha+\beta} & \frac{1}{2\alpha+\beta} & \frac{1}{3\alpha+\beta} & \cdots & \frac{1}{n\alpha+\beta} \\ \frac{1}{2\alpha+\beta} & \frac{1}{3\alpha+\beta} & \frac{1}{4\alpha+\beta} & \cdots & \frac{1}{(n+1)\alpha+\beta} \\ \vdots & \vdots & \vdots & & \vdots \\ \frac{1}{(n-1)\alpha+\beta} & \frac{1}{n\alpha+\beta} & \frac{1}{(n+1)\alpha+\beta} & \cdots & \frac{1}{(2n-2)\alpha+\beta} \end{array}\right|
\end{equation}
are given by
\begin{equation}\label{blockFormula}
	F_n(\alpha,\beta)=\frac{1}{\alpha^n}\prod_{k=0}^{n-1}\frac{\Gamma(\beta/\alpha + k)(k!)^2}{\Gamma(\beta/\alpha +n+k)} = \frac{1}{\alpha^n}\prod_{k=0}^{n-1}(k!)^2\prod_{j=0}^{n-1}\frac{\alpha}{\alpha(k+j)+\beta},
\end{equation} 
and satisfy the recursion relation
\begin{equation}
	\frac{F_nF_{n-2}}{F_{n-1}^2} =
	\frac{\alpha^2\left(\alpha(n-2)+\beta\right)^2(n-1)^2}{\left(\alpha(2n-2)+\beta\right)\left(\alpha(2n-3)+\beta\right)^2\left(\alpha(2n-4)+\beta\right)} ,
\end{equation}
starting with $\displaystyle F_1=\frac{1}{\beta},\; F_2=\frac{\alpha^2}{\beta(2\alpha+\beta)(\alpha+\beta)^2}$.
\end{theorem}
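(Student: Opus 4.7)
The plan is to realize the sequence $\mu_n = 1/(\alpha n + \beta)$ as the moment sequence of an explicit positive measure, apply Heine's formula \eqref{heines} to rewrite $F_n$ as a multidimensional integral, evaluate that integral as a special case of Selberg's integral, and derive the recursion directly from the resulting closed form.

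First, for $\alpha,\beta > 0$, the positive measure $d\lambda(t) = \alpha^{-1}\, t^{\beta/\alpha - 1}\, dt$ on $[0,1]$ satisfies $\int_0^1 t^n\, d\lambda(t) = 1/(\alpha n + \beta) = \mu_n$ for every $n \geq 0$. Since $d\lambda$ is positive with all moments finite, Heine's formula \eqref{heines} yields
$$F_n \;=\; \frac{1}{\alpha^n\, n!}\int_{[0,1]^n}\prod_{1\le l<k\le n}(x_k-x_l)^2\prod_{i=1}^n x_i^{\beta/\alpha - 1}\, dx_i.$$

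Second, this is the Selberg integral with parameters $a=\beta/\alpha$, $b=1$, $c=1$, whose classical evaluation gives $\prod_{j=0}^{n-1}\Gamma(\beta/\alpha+j)\,j!\,(j+1)!/\Gamma(\beta/\alpha+n+j)$. Using $\prod_{j=0}^{n-1} j!\,(j+1)! = n!\prod_{k=0}^{n-1}(k!)^2$ and dividing by $\alpha^n n!$ yields the Gamma-function form of \eqref{blockFormula}; the rational product form follows from $\Gamma(r+k)/\Gamma(r+n+k) = \prod_{j=0}^{n-1}(r+k+j)^{-1}$ with $r = \beta/\alpha$, after distributing $\alpha^n$ through the double product. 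As an independent check, the Hankel matrix defining $F_n$ is in fact Cauchy with node sets $\{\alpha i\}$ and $\{\alpha j + \beta\}$, so Cauchy's determinant formula reproduces \eqref{blockFormula} without invoking Selberg.

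Third, for the recursion, set $r = \beta/\alpha$ and $T_n = \alpha^n F_n$. A telescoping of the Gamma-function product gives $T_n/T_{n-1} = \Gamma(r+n-1)^2\,((n-1)!)^2 / [\Gamma(r+2n-1)\,\Gamma(r+2n-2)]$, and a second such ratio combined with $\Gamma(x+1) = x\Gamma(x)$ reduces $T_n T_{n-2}/T_{n-1}^2$ to $(r+n-2)^2(n-1)^2 / [(r+2n-2)(r+2n-3)^2(r+2n-4)]$. Because the total power of $\alpha$ in $F_n F_{n-2}/F_{n-1}^2$ is $-n-(n-2)+2(n-1)=0$, this ratio equals $T_n T_{n-2}/T_{n-1}^2$; clearing denominators in $r$ by a factor of $\alpha^4$ and picking up a compensating $\alpha^2$ in the numerator then recovers the stated recursion. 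The initial values $F_1 = 1/\beta$ and $F_2 = \alpha^2/[\beta(\alpha+\beta)^2(2\alpha+\beta)]$ are immediate from \eqref{blockFormula}.

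The main technical input is the evaluation of the Selberg integral at the triple $(a,b,c) = (\beta/\alpha,1,1)$, which I would cite rather than reprove; in this sub-case, however, the squared Vandermonde collapses into a determinant by the Andr\'eief identity and the evaluation reduces to the Cauchy determinantal formula, so no genuinely analytic input is needed. Everything else in the argument is routine Gamma-function algebra, and the whole plan respects the orthogonal-polynomial framework already set up in Section \ref{gen_proof}.
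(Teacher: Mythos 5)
Your proposal is correct and follows essentially the same route as the paper: the same measure $\alpha^{-1}x^{\beta/\alpha-1}\,dx$ on $[0,1]$ realizing the moments $1/(\alpha n+\beta)$, Heine's formula \eqref{heines}, Selberg's integral at $(a,b,g)=(\beta/\alpha,1,1)$, and the same Gamma-function telescoping (with the $\alpha^{4}$ clearing) for the recursion. Your aside that the matrix is a Cauchy matrix with nodes $\{\alpha i\}$ and $\{\alpha j+\beta\}$ is a valid and more elementary way to reach \eqref{blockFormula} without Selberg, but since you invoke it only as a check, the argument as a whole coincides with the paper's proof.
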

\begin{proof}
	Choose the function $\lambda(x)=x^{\beta/\alpha}/\beta$ which is always nondecreasing in the interval $[0,1]$ for $\beta/\alpha > 0$, then the corresponding positive measure
$$
d\lambda(x)=\chi_{[0,1]}\frac{x^{\beta/\alpha-1}}{\alpha}dx,
$$
where $\chi_I$ is the characteristic function of a measurable set $I\subset\mathbb R$, yields moments
$$
\mu_n = \int_{\mathbb R}x^nd\lambda(x)=\frac{1}{\alpha}\int_0^1 x^{n+\frac{\beta}{\alpha}-1}dx=\frac{1}{\alpha}\left[\frac{x^{n+\frac{\beta}{\alpha}}}{n+\frac{\beta}{\alpha}}\right]_0^1=\frac{1}{\alpha n +\beta}.
$$
Notice that this solves the Stieltjes moment problem uniquely for these sequences because our measure is infinitely supported on $[0,\infty)$, and its moments satisfy Carleman's condition \cite[th. 1.10]{shohat}. From this, the necessary condition $F_n>0$ is guaranteed to hold for any dimension $n$ \cite[th. 1.2]{shohat}, so the induced inner product is positive definite and thus the sequence of monic orthogonal polynomials $P_n(x)$ is infinite and unique. Thus their recurrence relations (\ref{recur}) hold for any $n\in\mathbb N$, so we can compute the determinants $F_n(\alpha,\beta)$ of any dimension. This is done by computing equation (\ref{heines})
$$
F_n(\alpha,\beta)=\frac{1}{n!}\int_0^1\cdots\int_0^1\prod_{i=1}^n\frac{x_i^{\frac{\beta}{\alpha}-1}}{\alpha}\prod_{1\leq l<k\leq n}(x_k-x_l)^2 dx_1\cdots dx_n
$$
by means of Selberg's integral formula \cite[8.1.1]{andrews}, an extension of Euler's Beta function which has applications in different fields within mathematics and physics:
\begin{align*}
		 \int_{[0,1]^n}\prod_{i=0}^n x_i^{a-1} (1-x_i)^{b-1}\prod_{1\leq l<k\leq j}|x_k-x_l|^{2g} dx_1\cdots dx_n = \prod_{k=0}^{n-1}\frac{\Gamma(a+kg)\Gamma(b+kg)\Gamma(1+(k+1)g)}{\Gamma(a+b+(n+k-1)g)\Gamma(1+g)},
\end{align*}
when $\Re e(a)>0,\Re e(b)>0$ and $\Re e(g) > -\min\{1/n, \Re e(a)/(n-1), \Re e(b)/(n-1)\}$. These conditions are satisfied for our case $a=\beta/\alpha>0$, and $b=g=1$. Therefore by substitution of these values
$$
F_n(\alpha,\beta)=\frac{1}{n!\alpha^n}\prod_{k=0}^{n-1}\frac{\Gamma(\beta/\alpha + k)\Gamma(1+k)\Gamma(2+k)}{\Gamma(\beta/\alpha +n+k)\Gamma(2)}=\frac{1}{\alpha^n}\prod_{k=0}^{n-1}\frac{\Gamma(\beta/\alpha + k)(k!)^2}{\Gamma(\beta/\alpha +n+k)}, 
$$
where the Gamma functions can be simplified by the factorial property $\Gamma(z+1)=z\Gamma(z)$ to get a closed formula:
$$
F_n(\alpha,\beta)= \frac{1}{\alpha^n}\prod_{k=0}^{n-1}(k!)^2\prod_{j=0}^{n-1}\frac{\alpha}{\alpha(k+j)+\beta}.
$$
Finally, the recursion equation (\ref{recursion}) can be worked out by telescoping the products of Gamma functions:
\begin{align*}
& \frac{F_nF_{n-2}}{F_{n-1}^2} =  \frac{1}{\alpha^n}\prod_{k=0}^{n-1}\frac{\Gamma(\frac{\beta}{\alpha} + k)(k!)^2}{\Gamma(\frac{\beta}{\alpha} +n+k)}\cdot 
	\alpha^{n-1}\prod_{k=0}^{n-2}\frac{\Gamma(\frac{\beta}{\alpha} +n-1+k)}{\Gamma(\frac{\beta}{\alpha}  + k)(k!)^2}\cdot \\
	& \quad\quad\quad\quad\quad\alpha^{n-1}\prod_{k=0}^{n-2}\frac{\Gamma(\frac{\beta}{\alpha} +n-1+k)}{\Gamma(\frac{\beta}{\alpha}  + k)(k!)^2}\cdot 
	\frac{1}{\alpha^{n-2}}\prod_{k=0}^{n-3}\frac{\Gamma(\frac{\beta}{\alpha}  + k)(k!)^2}{\Gamma(\frac{\beta}{\alpha} +n-2+k)} = \\
	& \text{\small $ = \frac{\Gamma(\frac{\beta}{\alpha}  +n-1)(n-1)!^2}{\Gamma(\frac{\beta}{\alpha}  +n-2)(n-2)!^2}\prod_{k=0}^{n-1}\frac{1}{(\frac{\beta}{\alpha}  +n-1+k)\Gamma(\frac{\beta}{\alpha}  +n-1+k)}\prod_{k=0}^{n-2}\Gamma\left(\frac{\beta}{\alpha}  +n-1+k\right)\cdot $} \\ 
	& \quad\prod_{k=0}^{n-2}\left(\frac{\beta}{\alpha}  +n-2+k\right)\Gamma\left(\frac{\beta}{\alpha}  +n-2+k\right)\prod_{k=0}^{n-3}\frac{1}{\Gamma\left(\frac{\beta}{\alpha}  +n-2+k\right)} = \\
\end{align*}
\begin{align*}
&	= \frac{(\frac{\beta}{\alpha} +n-2)(n-1)^2\Gamma(\frac{\beta}{\alpha} +2n-4)}{\Gamma(\frac{\beta}{\alpha} +2n-2)}\prod_{k=0}^{n-1}\frac{1}{\left(\frac{\beta}{\alpha} +n-1+k\right)}\prod_{k=0}^{n-2}\left(\frac{\beta}{\alpha} +n-2+k\right) \\
&	= \frac{(\frac{\beta}{\alpha}+n-2)^2(n-1)^2}{(\frac{\beta}{\alpha}+2n-2)(\frac{\beta}{\alpha}+2n-3)^2(\frac{\beta}{\alpha}+2n-4)}
\end{align*}
which yields the stated formula upon multiplying numerator and denominator by $\alpha^4$.
\end{proof}

Remarkably, this means that our polynomial recursion coefficients satisfy $\displaystyle\beta_{n} = \frac{1}{4}\beta^J_n$, where $\beta^J_n$ are those of the classical monic Jacobi polynomials of type $(\frac{\beta}{\alpha} -1, 0)$. These are generated by the measure $\chi_{[-1,1]}(1-x)^{\frac{\beta}{\alpha}-1}dx$, which induces a completely different moment sequence and set of orthogonal polynomials.

Our actual determinants $B_n$ have alternating 0's in the even positions of the moment sequence, so a block decomposition is needed to get them into the form of the theorem.

\begin{corollary}\label{hankeldet2}
	For any sequence of type $\displaystyle \left\{\frac{1}{\alpha n+\beta},0\right\}_{n=0}^\infty$ with $\alpha,\beta\in\mathbb R_{>0}$, where zeros alternate every other position, the corresponding Hankel determinants $B_n$ are given by the following block decomposition for even $n=2m$ or odd $n=2m-1$ dimension, $m\in\mathbb{N}$:
	$$
		B_{2m} = F_m(\alpha,\beta) F_m(\alpha,\beta+\alpha),\quad B_{2m-1} = F_m(\alpha,\beta) F_{m-1}(\alpha,\beta+\alpha),
	$$
	and obey the recurrence relations:
\begin{equation}\label{Heq1}
	\frac{B_{2m}B_{2m-2}}{(B_{2m-1})^2}=\frac{(\alpha(m-1)+\beta)^2}{(\alpha(2m-1)+\beta)(\alpha(2m-2)+\beta)},
\end{equation}
\begin{equation}\label{Heq2}
	\frac{B_{2m-1}B_{2m-3}}{(B_{2m-2})^2}=\frac{\alpha^2(m-1)^2}{(\alpha(2m-2)+\beta)(\alpha(2m-3)+\beta)},
\end{equation}
starting with $\displaystyle B_1=\frac{1}{\beta},\; B_2=\frac{1}{\beta(\alpha+\beta)}$.
\end{corollary}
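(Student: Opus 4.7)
The plan is to exploit the checkerboard zero pattern of the Hankel matrix built from $\{\mu_{2k} = 1/(k\alpha+\beta),\ \mu_{2k+1}=0\}$. An entry $(i,j)$ of $B_n$ is nonzero exactly when $i+j$ is even, so after a simultaneous row and column permutation that reorders the indices as $(1,3,5,\ldots,2,4,6,\ldots)$, the matrix becomes block diagonal. Since the same permutation $P$ is applied to rows and columns, $\det(PAP^T)=\det(A)$ and no sign ambiguity arises. The ``odd'' block has $(k,l)$-entry $\mu_{2(k+l-2)} = 1/((k+l-2)\alpha+\beta)$, which is exactly the matrix computed by $F_m(\alpha,\beta)$, while the ``even'' block has $(k,l)$-entry $\mu_{2(k+l-1)} = 1/((k+l-2)\alpha+(\alpha+\beta))$, identifying it as $F_m(\alpha,\alpha+\beta)$. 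For $n=2m$ both blocks are $m\times m$, yielding $B_{2m}=F_m(\alpha,\beta)\,F_m(\alpha,\alpha+\beta)$, while for $n=2m-1$ the odd block has size $m$ and the even block has size $m-1$, giving $B_{2m-1}=F_m(\alpha,\beta)\,F_{m-1}(\alpha,\alpha+\beta)$. The initial conditions $B_1=1/\beta$ and $B_2=1/(\beta(\alpha+\beta))$ drop out immediately from $F_1(\alpha,\gamma)=1/\gamma$.

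Given these product identifications, each of the two recursion ratios (\ref{Heq1}) and (\ref{Heq2}) reduces to a product of quotients of the form $F_m(\alpha,\gamma)/F_{m-1}(\alpha,\gamma)$ with $\gamma\in\{\beta,\alpha+\beta\}$. Using the closed-form product representation from (\ref{blockFormula}), each such quotient collapses: the double product $\prod_{k,j=0}^{m-1}1/(\alpha(k+j)+\gamma)$ differs from its size-$(m-1)$ counterpart only by the added boundary row $j=m-1$ and column $k=m-1$, which overlap at a single corner; the result is a single factor $1/(\alpha(2m-2)+\gamma)$ together with squared boundary factors $1/\prod_{s=m-1}^{2m-3}(\alpha s+\gamma)^2$. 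Substituting $\gamma=\beta$ and $\gamma=\alpha+\beta$ and exploiting the index shift $\alpha s+(\alpha+\beta) = \alpha(s+1)+\beta$, nearly all boundary factors cancel between numerator and denominator, leaving precisely the right-hand sides of (\ref{Heq1}) and (\ref{Heq2}).

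The main obstacle is not conceptual but careful bookkeeping: tracking the shift from $\beta$ to $\alpha+\beta$ and correctly identifying which boundary factors appear singly versus doubly in the $F_m/F_{m-1}$ quotients. A more structural alternative, which avoids the explicit telescoping, is to recognize that $\{\mu_n\}$ is the moment sequence of the symmetric measure $d\mu(x)=|x|^{2\beta/\alpha - 1}\chi_{[-1,1]}(x)\,dx/\alpha$ on $\mathbb R$. By parity, every $\alpha_n$ in (\ref{recur}) vanishes, so the three-term recursion decouples into its even-degree and odd-degree subsequences; the substitution $y=x^2$ converts each subsequence into a system of monic orthogonal polynomials for a measure already covered by the previous theorem, with parameters $(\alpha,\beta)$ and $(\alpha,\alpha+\beta)$ respectively. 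This vantage point makes transparent why the block decomposition produces exactly those two parameter shifts, and why the recursion coefficients for $B_n$ are built from interleaved pieces of the recursion coefficients for $F_n(\alpha,\beta)$ and $F_n(\alpha,\alpha+\beta)$.
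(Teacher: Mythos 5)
Your proposal is correct and follows essentially the same route as the paper: the checkerboard row/column permutation splits $B_n$ into the blocks $F_m(\alpha,\beta)$ and $F_m(\alpha,\alpha+\beta)$ (with the second block of size $m-1$ in the odd case), and the two recursion ratios are then evaluated from the closed formula (\ref{blockFormula}), your boundary-row/column cancellation being just the double-product version of the paper's Gamma-function telescoping. Your closing remark that the moments come from the symmetric measure $\frac{1}{\alpha}|x|^{2\beta/\alpha-1}\chi_{[-1,1]}(x)\,dx$, so that the even/odd polynomial subsequences under $y=x^2$ correspond to the parameters $\beta$ and $\alpha+\beta$, is a nice structural explanation of the block decomposition but is not needed for the argument.
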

\begin{proof}
	The Hankel determinants with 0's at every even position of the first row can be decomposed into blocks by the procedure mentioned in Section \ref{patterns} without altering the overall sign. Notice that the second block has as Hankel sequence the original one but shifted in index by $+1$, so the blocks are $F_m:=F_m(\alpha,\beta)$ and $E_m:=F_m(\alpha,\beta+\alpha)$. Analogously for $n=2m-1$, but in this case the number of 0's is now $m-1$, so the size of the second block is $(m-1)^2$ whereas the first is still $m^2$. Thus
	$$ B_{2m} = F_m E_m,\quad B_{2m-1} = F_m E_{m-1}.$$
Whence the recursion coefficients for the induced polynomials are, for even $n$,
		$$
		\beta_{n-1}=\beta_{2m-1}=\frac{B_{2m}B_{2(m-1)}}{B^2_{2m-1}}=\frac{E_m}{E_{m-1}}\frac{F_{m-1}}{F_m},
		$$
		and for odd $n$:
		$$
		\beta_{n-1}=\beta_{2m-2}=\frac{B_{2m-1}B_{2(m-1)-1}}{B^2_{2(m-1)}}=\frac{E_{m-2}}{E_{m-1}}\frac{F_{m}}{F_{m-1}}.
		$$
Therefore using (\ref{blockFormula}), that the corresponding $\beta/\alpha$ for the $E_m$ blocks is $\beta/\alpha +1$ and the factorial property of the Gamma function, the products can be simplified in the same way as in our previous proof:

\begin{align*}
	\frac{B_{2m}B_{2(m-1)}}{B^2_{2m-1}} & = \frac{1}{\alpha^m}\prod_{k=0}^{m-1}\frac{\Gamma(\beta/\alpha +1+ k)(k!)^2}{\Gamma(\beta/\alpha+1 +m+k)}\cdot 
	\alpha^{m-1}\prod_{k=0}^{m-2}\frac{\Gamma(\beta/\alpha+m+k)}{\Gamma(\beta/\alpha +1+ k)(k!)^2} \\
	& \frac{1}{\alpha^{m-1}}\prod_{k=0}^{m-2}\frac{\Gamma(\beta/\alpha + k)(k!)^2}{\Gamma(\beta/\alpha +m-1+k)}\cdot 
	\alpha^{m}\prod_{k=0}^{m-1}\frac{\Gamma(\beta/\alpha +m+k)}{\Gamma(\beta/\alpha + k)(k!)^2} = \\
	& = (\beta/\alpha+m-1)\prod_{k=0}^{m-1}\frac{1}{(\beta/\alpha+m+k)}\cdot \prod_{k=0}^{m-2}(\beta/\alpha+m+k-1) = \\
	& = \frac{(\beta/\alpha+m-1)^2}{(\beta/\alpha+2m-1)(\beta/\alpha+2m-2)}.
\end{align*}
Similarly,
\begin{align*}
	\frac{B_{2m-1}B_{2(m-1)-1}}{B^2_{2(m-1)}} & = \frac{1}{\alpha^{m-2}}\prod_{k=0}^{m-3}\frac{\Gamma(\beta/\alpha +1+ k)(k!)^2}{\Gamma(\beta/\alpha +m-1+k)}\cdot 
	\alpha^{m-1}\prod_{k=0}^{m-2}\frac{\Gamma(\beta/\alpha+m+k)}{\Gamma(\beta/\alpha +1+ k)(k!)^2} \\
	& \frac{1}{\alpha^{m}}\prod_{k=0}^{m-1}\frac{\Gamma(\beta/\alpha + k)(k!)^2}{\Gamma(\beta/\alpha +m+k)}\cdot 
	\alpha^{m-1}\prod_{k=0}^{m-2}\frac{\Gamma(\beta/\alpha +m-1+k)}{\Gamma(\beta/\alpha + k)(k!)^2} = \\
	& = \frac{(m-1)!^2\Gamma(\beta/\alpha+m-1)\Gamma(\beta/\alpha+2m-3)}{(m-2)!^2\Gamma(\beta/\alpha+m-1)\Gamma(\beta/\alpha+2m-1)} = \\
	& = \frac{(m-1)^2}{(\beta/\alpha+2m-2)(\beta/\alpha+2m-3)}.
\end{align*}

\end{proof}

Finally the coefficient formula of Section \ref{bigt} is obtained from this using (\ref{solEq}).
\begin{corollary}
	The Hankel determinants of size $n\times n$
$$
B_n=\det(A_n),\quad 
	(A_n)_{ij} = \begin{cases} 
      \frac{1}{i+j+1}, & \text{if } i+j\text{ is even}; \\
      0 & \text{otherwise}.
   \end{cases}
$$
satisfy the recurrence relation
\begin{equation}
	\frac{B_{n}B_{n-2}}{(B_{n-1})^2} = \frac{(n+(-1)^n)^2}{4n^2-1}.
\end{equation}
\end{corollary}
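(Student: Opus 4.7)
The plan is to deduce this corollary immediately from Corollary \ref{hankeldet2} by choosing the correct parameters. First I would observe that the matrix $A_n$ is exactly the Hankel matrix built from the moment sequence $\{\mu_n\} = \{1/3, 0, 1/5, 0, 1/7, 0, \dots\}$, since $(A_n)_{ij}$ is $1/(i+j+1)$ when $i+j$ is even and $0$ otherwise, meaning $\mu_{2k} = 1/(2k+3)$ and $\mu_{2k+1} = 0$. This matches the template $\{\tfrac{1}{\alpha n + \beta}, 0\}_{n=0}^\infty$ of Corollary \ref{hankeldet2} with $\alpha = 2$ and $\beta = 3$, both positive so the hypotheses are satisfied.

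With these parameters fixed, I would then substitute into equations (\ref{Heq1}) and (\ref{Heq2}) separately and rewrite everything in terms of $n$. For even $n = 2m$, equation (\ref{Heq1}) yields
\[
\frac{B_{n}B_{n-2}}{B_{n-1}^2} = \frac{(2(m-1)+3)^2}{(2(2m-1)+3)(2(2m-2)+3)} = \frac{(2m+1)^2}{(4m+1)(4m-1)} = \frac{(n+1)^2}{(2n+1)(2n-1)},
\]
which is exactly $(n+(-1)^n)^2/(4n^2-1)$ since $(-1)^n = +1$. For odd $n = 2m-1$, equation (\ref{Heq2}) yields
\[
\frac{B_{n}B_{n-2}}{B_{n-1}^2} = \frac{4(m-1)^2}{(4m-1)(4m-3)} = \frac{(2m-2)^2}{(2n+1)(2n-1)} = \frac{(n-1)^2}{4n^2-1},
\]
which again matches $(n+(-1)^n)^2/(4n^2-1)$ since now $(-1)^n = -1$. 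Combining the two parity cases into the single closed form using the sign $(-1)^n$ gives the stated recurrence.

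There is no real obstacle at this stage: the heavy lifting (Selberg's integral formula, the block decomposition from Section \ref{patterns}, and the Gamma-function telescoping of the preceding theorem) has already been done in Corollary \ref{hankeldet2}. The only task remaining is the bookkeeping just described, namely the parameter substitution $\alpha = 2, \beta = 3$, the change of index from $m$ back to $n$, and the verification that the two parity formulas coincide with a single expression involving $(-1)^n$.
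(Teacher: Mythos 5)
Your proposal is correct and follows the same route as the paper: identify $A_n$ as the Hankel matrix of the sequence $\left\{\tfrac{1}{2k+3},0\right\}_{k\ge 0}$, substitute $\alpha=2$, $\beta=3$ into the two parity recurrences of the block-decomposition corollary, and rewrite $m$ in terms of $n$ to combine both cases into the single $(n+(-1)^n)^2/(4n^2-1)$ formula. The only difference is that you write out explicitly the elementary simplifications the paper leaves as "straightforward," and your arithmetic checks out in both the even and odd cases.
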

\begin{proof}
	Notice the matrix entry at $(A_n)_{ij}$ is precisely the element of the sequence $\displaystyle \left\{\frac{1}{2 n+3},0\right\}_{n=0}^\infty$ where $n=i+j-2$. Thus substituting $\alpha=2$ and $\beta=3$ into the equations (\ref{Heq1}), (\ref{Heq2}) above, the result follows straightforwardly when simplifying the theorem formulas after indices are written in terms of the dimension, $m=n/2$ or $m=(n+1)/2$ for the even and odd cases respectively.
\end{proof}


\section{Conclusions}
\label{sec:conclusions}

In this paper, we established the close connection between the Frenet-Serret apparatus and the local singular value decomposition of regular curves in $\mathbb R^n$. The local singular value decomposition was defined as the limit of the singular value decomposition of a family of covariance matrices defined on the curve.
In particular, we showed in Theorem~\ref{big} that the Frenet-Serret frame and the
local singular vectors of regular curves in $\mathbb R^n$ agree (up to a factor of $\pm 1$). In addition we showed in Theorem~\ref{bigt} that values of each of the curvature functions can be expressed in terms of ratios of local singular values for regular curves in $\mathbb R^n$ for any dimension, with a proportionality coefficient that was obtained exactly through its relation to Hankel determinants via monic orthogonal polynomials. With this, the techniques allow for highly accurate approximations of the Frenet-Serret apparatus in terms of local SVD computations.


\section*{Acknowledgments}
We would like to thank Louis Scharf and Olivier Pinaud for helpful discussions. 

\bibliographystyle{amsplain}
\bibliography{references}

\end{document}